\newenvironment{proof}{\noindent {\bf Proof }}
{\hfill $\bullet$ \vspace{0.25cm}}
\newcommand{\nn}{\nonumber}
\newcommand{\dis}{\displaystyle}
\newtheorem{thm}{Theorem}
\newtheorem{prop}{\indent Proposition}
\newtheorem{lem}{\indent Lemma}
\newcommand{\mmmintone}[1]{{\dis{\int\kern -.36cm-}}_{\kern-.21cm\substack{#1}}\;\;}
\newcommand{\mmmintwo}[2]{{\dis{\int\kern -.43cm-}}_{\kern-.21cm\substack{#1}}^{\substack{#2}}\;\;}
\newcommand{\submint}{{\scriptstyle{\int\kern -.66em -}}}
\newcommand{\submintone}[1]{{\scriptstyle{\int\kern -.66em-}}_{\scriptscriptstyle{\kern-.21em\substack{#1}}}}
\newcommand{\fracmint}{{\textstyle{\int\kern -.88em -}}}
\newcommand{\fracmintone}[1]{{\textstyle{\int\kern -.88em
-}}_{\scriptscriptstyle{\kern-.21em\substack{#1}}}\;}
\title{A free boundary problem in biological selection models}
\author{Jimyeong Lee\footnote{  E-mail: ljm9667@gmail.com } \\Gran Sasso Science Institute, Via. F. Crispi 7, 67100 L'Aquila, Italy}
\date{\today}
\begin{document}

\maketitle

\begin{abstract}
We prove local existence  for classical solutions of a free boundary problem which arises in one of the biological selection models proposed by Brunet and Derrida, \cite{BD}. The problem we consider describes the limit evolution of branching brownian particles on the line with death of the leftmost particle at each creation time as studied in \cite{DFPS}.  We use extensively results in \cite{Cannon} and \cite{Fasano}.
\end{abstract}
%
%

\vskip 1cm

%
%
%



\section{Introduction}

\label{sec:intro}

Brunet and Derrida \cite{BD}, have proposed a class of models for biological selection processes including the one we consider here.  This is a system of $N$ brownian particles on the line which branch independently at rate 1 creating a new brownian particle on the same position of the father; simultaneously the leftmost particle (which is the less fit) disappears.  Thus the total number of particles does not change.

In \cite{DFPS} it is proved that the particles density has a limit as $N$ diverges (under suitable assumptions on the initial datum).  It is also proved that if the following FBP has a ``classical solution'' then this is the same as the limit density of the branching brownians, similar results have been obtained for other FBP, see \cite{CDGP}.

Let $b\in \mathbb R$, say $b>0$, and  $\displaystyle \rho_0\in C_c^{2}([b,\infty))$  such that  $\rho_0(b)=0$, $\frac{d}{dx}\rho_{0}(b)=2$, $\int_{b}^{\infty}\rho_0(x)dx=1$. The free boundary problem FBP $(*)$ that we consider is: fixed $b$ and $\rho_0$ as above
find a continuous curve $L_t$, $t\ge 0$, starting from $L_0 = b$, and $\rho(x,t)$,
$x\ge L_t; t\ge 0$,
 so that
\[
(*)\begin{cases}

     \displaystyle \rho_t(x,t)=\frac{1}{2}\rho_{xx}(x,t)+\rho(x,t),     & \quad \text{if } L_t<x,\ \ t> 0,\\

      \displaystyle \rho(L_t,t)=0,     & \quad \text{if } t \ge 0,\\
     \displaystyle \rho(x,0)=\rho_0(x),  & \quad \text{if } b\leq x,\\
    \displaystyle \int_{L_t}^{\infty}\rho(x,t)dx=\int_{b}^{\infty}\rho_0(x)dx=1,  & \quad \text{if } t> 0.\\

  \end{cases}
\]
\noindent
Classical solutions of the FBP $(*)$ are defined in the next section.
 In this paper we prove the local in time existence of a classical solution of
 the FBP $(*)$, observing that uniqueness follows from the results in \cite{DFPS}. Traveling wave solutions and the fine asymptotics of $L_t$ are studied  in \cite{BBD} for a
large class of FBP including our FBP $(*)$.

 \vskip2cm

\section{Main results}

By classical solutions we mean the following:

\medskip

{\bf Definition.}
The pair $(L,\rho)$ is a classical solution of the FBP $(*)$
in the time interval $[0,T]$, $T>0$,
 if:

 \begin{itemize}

 \item
 $\displaystyle L\in C^{1}([0,T])$, $L_0=b$

  \item
  $\rho\in C(\overline{D_{L,T}})\cap C^{2,1}(D_{L,T})$, where $\displaystyle D_{L,T}=\{(x,t): L_t<x,\ 0<t<T\}$

  \item   $(L,\rho)$ satisfies the equations in  $(*)$.
\end{itemize}

 \medskip

\begin{thm}
\label{34}

There is $T>0$ so that the FBP $(*)$ has a classical solution
in the time interval $[0,T]$  (denoted by $(L,\rho)$).

\end{thm}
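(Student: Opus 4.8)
The plan is to reformulate the FBP as a fixed-point problem for the free boundary $L$ alone. Given a candidate curve $L \in C^1([0,T])$ with $L_0 = b$ and $\dot L_0$ prescribed, I would solve the linear parabolic problem
\[
\rho_t = \tfrac12 \rho_{xx} + \rho, \qquad x > L_t,\ t\in(0,T], \qquad \rho(L_t,t) = 0,\qquad \rho(x,0) = \rho_0(x),
\]
on the moving domain $D_{L,T}$. This is a standard Dirichlet problem in a non-cylindrical domain; existence, uniqueness and regularity up to the lateral boundary follow from the classical theory (as in \cite{Cannon}, \cite{Fasano}), after flattening the boundary by $y = x - L_t$ and absorbing the lower-order drift term, or equivalently by using heat potentials. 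The exponential reaction term $+\rho$ is harmless: substituting $\rho = e^t u$ reduces it to the pure heat equation with moving boundary. This produces a solution operator $L \mapsto \rho[L]$.

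**Extracting the boundary map from the mass constraint.**

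The mass constraint $\int_{L_t}^\infty \rho(x,t)\,dx = 1$ is the equation that determines $L$. Differentiating in $t$ and using the PDE,
\[
0 = \frac{d}{dt}\int_{L_t}^\infty \rho\,dx = -\dot L_t\,\rho(L_t,t) + \int_{L_t}^\infty\Big(\tfrac12\rho_{xx} + \rho\Big)dx = -\tfrac12 \rho_x(L_t,t) + \int_{L_t}^\infty \rho\,dx,
\]
so the constraint is equivalent (given it holds at $t=0$) to $\rho_x(L_t,t) = 2$ for all $t$; note $\rho_0$ was chosen precisely so this is consistent at $t=0$. Thus the problem becomes: find $L$ such that the normal derivative of $\rho[L]$ at the boundary equals $2$. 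Writing $\rho[L]$ via a single-layer heat potential with an unknown density $\psi$ on the lateral boundary, the Dirichlet condition $\rho(L_t,t)=0$ together with $\rho(\cdot,0)=\rho_0$ gives a Volterra integral equation of the second kind for $\psi$ (using the jump relations for the potential), while the condition $\rho_x(L_t,t)=2$ becomes a second equation. Combining, I would derive a closed nonlinear integral equation of the form $\dot L_t = \mathcal{F}(L)(t)$ where $\mathcal F$ involves the heat kernel, $\rho_0$, and Abel-type kernels; the key structural point is that $\mathcal F(L)(t)$ depends on $L|_{[0,t]}$ and the singular kernel is integrable ($t^{-1/2}$ type).

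**Contraction on a short time interval.**

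I would then set up a Banach fixed point argument in a closed ball of $C^1([0,T])$ (or $C^{1+\alpha/2}$ in $t$ for the velocity, to control the Hölder norms the potential estimates require), centered at the linear profile $L_t = b + 2t$ — wait, one should check the correct initial velocity; since $\rho_{0,x}(b) = 2$ and near $t=0$ the solution looks like the linearization, the natural first guess has $\dot L_0$ determined by matching, and the standard computation gives the initial velocity in terms of $\rho_{0,xx}(b)$. For $T$ small, the singular integral operators have small norm (the $t^{-1/2}$ kernels integrate to $O(T^{1/2})$), so $\mathcal F$ maps the ball into itself and is a contraction; the fixed point $L$ is the free boundary, and $\rho = \rho[L]$ is the classical solution. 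Finally I would verify the regularity claims: $L \in C^1([0,T])$ from the integral equation, and $\rho \in C(\overline{D_{L,T}}) \cap C^{2,1}(D_{L,T})$ with $\rho_x$ continuous up to the boundary from potential-theoretic smoothing, using that $\rho_0 \in C^2_c$ and the compatibility conditions $\rho_0(b) = 0$, $\rho_{0,x}(b) = 2$ hold.

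**Main obstacle.**

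The hardest part is the boundary analysis: getting a well-posed integral equation for the free boundary requires controlling $\rho_x$ at the moving boundary, which is one derivative beyond what the Dirichlet data directly gives, and the corresponding kernel in the integral equation for $\dot L$ is genuinely singular (Abel type). Showing this operator is a contraction for small $T$ — in particular tracking how the compatibility conditions on $\rho_0$ at $x=b$ make the equation non-degenerate at $t=0$, and choosing the right Hölder space so that $\rho_{xx}$ does not blow up as $t\to 0^+$ — is the technical core, and this is presumably where the detailed estimates from \cite{Cannon} and \cite{Fasano} are invoked.
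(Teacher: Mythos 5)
Your high-level plan (fix a curve $L$, solve a linear problem in the moving domain, re-impose the remaining boundary condition as a fixed-point equation for $L$ on a short time interval, using heat potentials and Abel-kernel estimates from \cite{Cannon}) is the same general strategy as the paper, and your observation that the mass constraint is equivalent to $\rho_x(L_t,t)=2$ is exactly the reformulation the paper exploits. However, there is a genuine gap at the step you yourself flag as the ``technical core'': you assert that combining the Dirichlet condition $\rho(L_t,t)=0$ with the Neumann condition $\rho_x(L_t,t)=2$ yields ``a closed nonlinear integral equation of the form $\dot L_t=\mathcal F(L)(t)$,'' but you never exhibit the mechanism by which $\dot L_t$ enters. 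In your formulation (Dirichlet data $\rho=0$ given $L$, with $\rho_x=2$ as the extra condition) the velocity does not appear at all: the extra condition is an implicit functional equation $N[L](t)=2$, not a velocity law, and to extract $\dot L_t=-\tfrac14\rho_{xx}(L_t,t)$ you would need $\rho_{xx}$ up to a merely Lipschitz moving boundary — regularity that the potential-theoretic solution of the Dirichlet problem does not give you at this stage, which makes the scheme circular as stated. The paper's resolution is precisely the missing idea: differentiate in $x$ and set $v=e^{-t}\rho_x$, which \emph{swaps the roles} of the boundary conditions — $\rho_x(L_t,t)=2$ becomes the Dirichlet datum $v(L_t,t)=2e^{-t}$, while $\rho(L_t,t)=0$, differentiated along the curve, becomes the explicit velocity law $\dot L_t=-\tfrac14 e^t v_x(L_t,t)$, so that the map $K[L](t)=b-\tfrac14\int_0^t e^{\tau}v_x(L_\tau,\tau)\,d\tau$ is well defined and only requires $v_x$ at the boundary, which is exactly what the jump relation and the Cannon estimates control.

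Two further points. First, your potential-theoretic setup is off in a way that matters: a single-layer heat potential is continuous across the lateral boundary, so the Dirichlet condition gives a Volterra equation of the \emph{first} kind, not second kind; the jump relation applies to its spatial derivative. The paper avoids this by representing $v$ with the kernel $G_x$ (plus the initial-data potential), so that the Dirichlet condition for $v$ becomes a genuine second-kind Volterra equation (Lemmas \ref{45}--\ref{3251}). Second, the paper does not prove a contraction: the Lipschitz-type estimate it obtains for $L\mapsto K[L]$ has a constant with no smallness, so it establishes only continuity of $K$ on the convex compact set $\Sigma(A,T)$ of uniformly Lipschitz curves and invokes the Schauder fixed point theorem; your claim that the map is a Banach contraction for small $T$ is unsupported as written. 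Finally, note that after the fixed point is found one still has to go back from $v$ to $\rho=\int_{L_t}^x e^t v$ and verify the integral constraint itself (the paper does this by showing $\int_{L_t}^\infty v=0$ and then solving the ODE $\tfrac{d}{dt}\int_{L_t}^\infty\rho = -1+\int_{L_t}^\infty\rho$ with initial value $1$), together with the integrability of $\rho$ coming from the explicit potential representation — a step your proposal only gestures at.
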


\medskip

We will prove Theorem \ref{34} in the next sections.  We will also prove that
the derivative $\rho_x(x,t)$  has a limit when $x\rightarrow L_t$ denoted by $\displaystyle\rho_x(L_t,t)$ and that it vanishes as $x\to \infty$.
 This together with the conservation law implies  $\displaystyle\rho_x(L_t,t)=2$ for all $t\in[0,T]$. Furthermore if also $\rho_{xx}(L_t,t)$ exists, by differentiating $\rho(L_t,t)=0$ we get $\displaystyle  \dot {L_t}=-\frac{1}{4}\rho_{xx}(L_t,t)$.\\

Given $(L,\rho)$ as above we define $\displaystyle v(x,t):=e^{-t}\rho_{x}(x,t)$ and observe that $(L,v)$  satisfies the following free boundary problem FBP $(**)$:
\[
(**)\begin{cases}

      \displaystyle v_t(x,t)=\frac{1}{2}v_{xx}(x,t),  & \quad \text{if } L_t<x,\ \ t> 0,\\

         \displaystyle v(L_t,t)=2e^{-t},  & \quad \text{if } t> 0,\\

      \displaystyle  v(x,0)=h(x),  &\quad  h\in C_c^{1}([b,\infty)) \\

     \displaystyle  \dot {L_t}=-\frac{1}{4}e^{t}v_x(L_t,t),  & \quad \text{if }  t>0,
  \end{cases}
\]


 To prove  Theorem \ref{34} we will first prove the existence of a classical solution of the FBP $(**)$  in $[0,T]$:

 \medskip

\begin{thm}
\label{3424}
 There is $T>0$ and a pair $(L,v)$ which satisfies the FBP $(**)$ in $[0,T]$ with:   $\displaystyle L\in C^{1}([0,T])$, $L_0=b$, and $v\in C^{2,1}(D_{L,T})$, where $\displaystyle D_{L,T}=\{(x,t): L_t<x,\ 0<t\leq T\}$.
\end{thm}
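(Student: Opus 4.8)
The plan is to convert the free boundary problem FBP $(**)$ into a fixed point problem for the curve $L$ and solve it by a contraction (or Schauder) argument on a small time interval, following the heat-potential approach of Cannon \cite{Cannon} and Fasano \cite{Fasano}. First I would fix a small $T>0$ and a class $\Sigma_T$ of admissible curves $L\in C^1([0,T])$ with $L_0=b$, $\dot L_0$ prescribed, and a uniform Lipschitz/H\"older bound on $\dot L$; this is a closed bounded convex set in an appropriate Banach space. Given $L\in\Sigma_T$, the strategy is: solve the heat equation $v_t=\tfrac12 v_{xx}$ in the domain $D_{L,T}=\{x>L_t,\ 0<t\le T\}$ with initial datum $h$ and Dirichlet datum $v(L_t,t)=2e^{-t}$; this is a standard initial-boundary value problem for the heat equation in a noncylindrical domain with a $C^1$ lateral boundary, so by the classical theory in \cite{Cannon}, \cite{Fasano} it has a unique solution $v=v[L]\in C(\overline{D_{L,T}})\cap C^{2,1}(D_{L,T})$, and — crucially — the normal derivative $v_x(L_t,t)$ exists, is continuous in $t$, and is given by a single-layer/double-layer heat potential representation. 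Then define the map $\mathcal{M}$ by
\[
(\mathcal{M}L)_t := b - \frac14\int_0^t e^{s}\,v[L]_x(L_s,s)\,ds,
\]
so that a fixed point of $\mathcal{M}$ is exactly a solution curve of FBP $(**)$, with $v[L]$ the associated solution.

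The core of the argument is to show $\mathcal{M}$ maps $\Sigma_T$ into itself and is a contraction for $T$ small. For the self-mapping property I would use the explicit potential representation of $v_x(L_t,t)$: compatibility of the data at $t=0$ (here $h(b)=0$, matching $2e^{-0}=2$ would fail, so in fact one must be careful — $h=\rho_{0,x}$ has $h(b)=\rho_{0,x}(b)=2$, which does match $v(L_0,0)=2$, good; and the prescribed $\dot L_0$ comes from $-\tfrac14 h_x(b)$) ensures $v_x(L_t,t)$ extends continuously to $t=0$ with the right value, and the potential estimates give a modulus of continuity for $t\mapsto v_x(L_t,t)$ controlled by the data and by the $C^1$-norm of $L$, with constants that do not blow up as $T\to 0$. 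Integrating in $t$ then gains a factor of order $T$ (or $T^\alpha$), which both returns us to $\Sigma_T$ and produces the contraction constant. For the contraction estimate I would subtract the potential representations for two curves $L^1,L^2\in\Sigma_T$, estimate $\|v[L^1]_x(L^1_\cdot,\cdot)-v[L^2]_x(L^2_\cdot,\cdot)\|$ in terms of $\|L^1-L^2\|_{C^1}$ using the Lipschitz dependence of the heat kernel and its derivatives on the (H\"older) boundary curve, and again integrate to pick up the small factor. By Banach's fixed point theorem $\mathcal{M}$ has a unique fixed point $L\in\Sigma_T$; setting $v:=v[L]$ gives a pair satisfying FBP $(**)$, and the interior regularity $v\in C^{2,1}(D_{L,T})$ is automatic from parabolic interior estimates once $L\in C^1$.

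The main obstacle I expect is the analysis of the flux term $v_x(L_t,t)$ near $t=0$: one must show that this quantity is not merely finite but has the precise limiting value forced by the initial datum and that it is (H\"older) continuous up to $t=0$ uniformly in $L\in\Sigma_T$, since otherwise the integral defining $\mathcal{M}L$ need not land back in a $C^1$ class with the correct initial slope, and the contraction constant could fail to be small. Handling this requires the compatibility conditions on $h$ at $x=b$ (vanishing or matching of $h$, boundedness of $h_x$, compact support) together with careful estimates on the heat single-layer potential and its spatial derivative along a moving $C^1$ boundary — precisely the kind of estimates assembled in \cite{Cannon} and \cite{Fasano}. A secondary technical point is choosing the function space for $\dot L$ (Lipschitz versus $C^{0,\alpha}$) so that the boundary is regular enough for the potential theory to apply while still being preserved by $\mathcal{M}$; I would work with $\dot L\in C^{0,\alpha}([0,T])$ for a fixed $\alpha\in(0,1/2)$, which is the natural scale for the parabolic problem. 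Once local existence for FBP $(**)$ is in hand, Theorem \ref{34} follows by integrating $v$ to recover $\rho$ and checking the conservation law, as indicated in the remarks following Theorem \ref{34}.
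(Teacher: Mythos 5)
Your overall strategy coincides with the paper's: represent $v$ by heat potentials on the moving domain, define the map $L\mapsto b-\frac14\int_0^t e^{s}v[L]_x(L_s,s)\,ds$, and find a fixed point. But the fixed-point mechanism you choose is where the gap lies. You propose a Banach contraction on a class of $C^{1}$ (indeed $C^{1,\alpha}$) curves, and the crucial step --- that $\|v[L^1]_x(L^1_\cdot,\cdot)-v[L^2]_x(L^2_\cdot,\cdot)\|$ is controlled by $\|L^1-L^2\|_{C^1}$ with a constant that becomes small after integration in time --- is asserted, not proved, and it is precisely the hard point. If you difference the potential representations of the flux for two curves in the naive way, the dependence of the kernels on the boundary argument produces terms like $\partial_\xi G_x(L_t,t;L_\tau,\tau)\sim (t-\tau)^{-3/2}$, whose time integral diverges; one needs a cancellation or an integration-by-parts device before any Lipschitz (let alone contractive) estimate in the curve can come out. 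You do not indicate such a device, so as written the contraction step would fail.

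The paper avoids this difficulty by lowering its demands on the map $K$. It works on the set $\Sigma(A,T)$ of Lipschitz curves with constant $A$, which is convex and compact in the sup norm, so the Schauder fixed point theorem applies and only \emph{continuity} of $K$ is needed, not contractivity. Even that continuity is not obtained by differencing the flux representations directly: Lemma \ref{637} uses a second Green's identity (the moment identity $\oint[\xi v\,d\xi+\frac12(\xi v_\xi-v)\,d\tau]=0$) to convert the comparison of $K[L^{(1)}]$ and $K[L^{(2)}]$ into estimates on differences of moments of the heat kernel along the two curves, where only integrable singularities $(t-\tau)^{-1/2}$ appear, and then a Gronwall-type lemma (Cannon 17.7.1) gives $\sup|K[L^{(1)}]-K[L^{(2)}]|\le C\sup|L^{(1)}-L^{(2)}|$ with a constant that need not be small. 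The self-mapping property (Lemma \ref{567}) and the a priori bound and continuity of $v_x(L_t,t)$ (Lemma \ref{907}, again via a Green's identity and a Volterra inequality) supply the rest, and $L\in C^1$ follows a posteriori from the continuity of the flux along the fixed-point curve. So either you must supply the missing cancellation mechanism to make your contraction estimate genuine (in the spirit of the classical Stefan-problem arguments, which exploit special structure of the boundary data), or you should weaken the scheme to continuity plus compactness as the paper does; a minor additional point is that Theorem \ref{3424} assumes only $h\in C_c^1([b,\infty))$, with no corner compatibility at $(b,0)$, so your argument should not rely on $h(b)=2$.
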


\medskip

We did not find a proof of Theorem \ref{3424} in the existing literature, see for instance \cite{BH}, \cite{CV} and references therein.  Our proof exploits the one dimensionality of the problem and uses extensively the Cannon estimates, \cite{Cannon}, following the strategy proposed by Fasano in \cite{Fasano}.
In the next sections we will prove Theorem \ref{3424} and then, taking $h$ equal to the space derivative of $\rho_0$, we will prove Theorem \ref  {34} as a corollary  of Theorem \ref{3424}.

\section{Strategy of the proof of  Theorem \ref{3424} }
The idea is to reduce the analysis of the FBP $(**)$ to a fixed point problem:

\begin{itemize}

\item Take a curve $L_t, t\in[0,T]$ and find $v$ which solves from the first equation to the third one of $(**)$ in the domain $[L_t,\infty)$, $t\in[0,T]$

\item Construct a new curve $\displaystyle K[L](t)=b-\frac{1}{4}\int_{0}^{t}e^{\tau}v_x(L_\tau,\tau)d\tau$ for $0\leq t\leq T$

\item Find $L$ so that $K[L]=L$ and prove that the corresponding pair $(L,v)$
solves the FBP (**)

\end{itemize}

 The first task is to prove existence and smoothness of $v$, that we do in this section using   the   lemmas below, see \cite{Cannon}.
Let $A>0$ and
   $$
\Sigma(A,T):=\left\{L\in C[0,T] : L_0=b,\ \left |\frac{L_{t_2}-L_{t_1}}{t_2-t_1}\right |\leq A\quad \text{for}\ 0\leq t_1<t_2\leq T    \right\}.
   $$
%
%
%
%
%
%

\medskip

We define $\displaystyle C_{(1)}^{0}((0,T])$ as the subspace of $C((0,T])$ that consists of those functions $\varphi$ such that
\begin{eqnarray}
\displaystyle\lVert\varphi\rVert_{T}^{(1)}=\sup_{0<t\leq T}|\varphi(t)|<\infty.
\end{eqnarray}
Thus $\displaystyle C_{(1)}^{0}((0,T])$ is a Banach space under the norm $\displaystyle\lVert\cdot\rVert_{T}^{(1)}$. \\Let us denote by $G(x,t;\xi,\tau)$ the Gaussian kernel $\displaystyle\frac{1}{\sqrt{2\pi(t-\tau)}}\exp\left\{-\frac{|x-\xi|^{2}}{2(t-\tau)}\right\}$ which satisfies $\displaystyle G_t=\frac{1}{2}G_{xx}$ and $\displaystyle G_\tau=-\frac{1}{2}G_{\xi\xi}$.

\begin{lem}[\emph{jump relation}]
\label{45}
Let $L\in \Sigma(A,T)$ and $\displaystyle\varphi\in C_{(1)}^{0}((0,T])$.  Define
  $$
w_\varphi(x,t;L):=\int_{0}^{t}G(x,t;L_\tau,\tau)\varphi(\tau)d\tau.
  $$
Then
\begin{eqnarray}
\lim_{x\rightarrow L_t^{+}}\frac{\partial w_\varphi}{\partial x}(x,t;L)=-\varphi(t)+\int_{0}^{t}G_x(L_t,t;L_\tau,\tau)\varphi(\tau)d\tau.
\end{eqnarray}

\end{lem}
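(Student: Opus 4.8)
The strategy is to differentiate $w_\varphi$ under the integral sign for $x>L_t$, and then to let $x\downarrow L_t$ while isolating the contribution of the diagonal $\tau=t$; it is exactly that contribution which produces the term $-\varphi(t)$. All of this is a one–dimensional instance of the heat–potential jump relation, and the needed kernel bounds are the Cannon estimates of \cite{Cannon}; what must be checked is their adaptation to the moving, merely Lipschitz pole $\tau\mapsto L_\tau$.

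I would first record the preliminaries. For fixed $x_0>L_t$ and $x$ in a small one–sided neighbourhood of $x_0$ one has $x-L_\tau\ge c>0$ for $\tau$ close to $t$ (because $L$ is Lipschitz), so $G(x,t;L_\tau,\tau)\varphi(\tau)$ and $G_x(x,t;L_\tau,\tau)\varphi(\tau)=-\frac{x-L_\tau}{t-\tau}G(x,t;L_\tau,\tau)\varphi(\tau)$ are bounded on $\tau\in(0,t)$ locally uniformly in $x$; hence $w_\varphi(\cdot,t;L)\in C^1(\{x>L_t\})$ with $\partial_x w_\varphi(x,t;L)=\int_0^t G_x(x,t;L_\tau,\tau)\varphi(\tau)\,d\tau$. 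Standard Gaussian bounds give $\int_0^t|G_x(x,t;L_\tau,\tau)|\,d\tau\le C(A,T)$ uniformly for $x$ near $L_t$, and, using $|L_t-L_\tau|\le A(t-\tau)$, also $|G_x(L_t,t;L_\tau,\tau)|\le A\,(2\pi(t-\tau))^{-1/2}$, so $\int_0^t G_x(L_t,t;L_\tau,\tau)\varphi(\tau)\,d\tau$ is an honest absolutely convergent integral, not a principal value. Finally, with the frozen pole, the substitution $s=t-\tau$, $u=(x-L_t)^2/2s$ gives $\int_0^t G_x(x,t;L_t,\tau)\,d\tau=-\,\mathrm{erfc}\big(\tfrac{x-L_t}{\sqrt{2t}}\big)$, which tends to $-1$ as $x\downarrow L_t$.

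The core of the argument is the decomposition, for $x>L_t$,
\[
\partial_x w_\varphi(x,t;L)=\underbrace{\int_0^t\big[G_x(x,t;L_\tau,\tau)-G_x(x,t;L_t,\tau)\big]\varphi(\tau)\,d\tau}_{I_1(x)}+\underbrace{\int_0^t G_x(x,t;L_t,\tau)\varphi(\tau)\,d\tau}_{I_2(x)}.
\]
For $I_1$, the mean value theorem in the pole variable with $|L_\tau-L_t|\le A(t-\tau)$ and $|G_{xx}|\le C(t-\tau)^{-3/2}$ yields the $\tau$–integrable, $x$–uniform bound $|G_x(x,t;L_\tau,\tau)-G_x(x,t;L_t,\tau)|\le CA(t-\tau)^{-1/2}$; since the integrand converges pointwise (for $\tau<t$) to $G_x(L_t,t;L_\tau,\tau)\varphi(\tau)$ — note $G_x(L_t,t;L_t,\tau)=0$ — dominated convergence gives $I_1(x)\to\int_0^t G_x(L_t,t;L_\tau,\tau)\varphi(\tau)\,d\tau$. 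For $I_2$, write $\varphi(\tau)=\varphi(t)+(\varphi(\tau)-\varphi(t))$: the first piece equals $\varphi(t)\int_0^t G_x(x,t;L_t,\tau)\,d\tau\to-\varphi(t)$, and the second tends to $0$ since, splitting $[0,t]=[0,t-\delta]\cup[t-\delta,t]$, the near–diagonal part is at most $\big(\sup_{|\tau-t|\le\delta}|\varphi(\tau)-\varphi(t)|\big)\int_0^t|G_x(x,t;L_t,\tau)|\,d\tau\le C\,\omega_\varphi(\delta)$ and on $[0,t-\delta]$ one has $G_x(x,t;L_t,\tau)\to0$ uniformly as $x\downarrow L_t$; letting first $x\downarrow L_t$ and then $\delta\downarrow0$ removes it. Adding the two limits produces exactly $-\varphi(t)+\int_0^t G_x(L_t,t;L_\tau,\tau)\varphi(\tau)\,d\tau$.

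The only delicate point is the behaviour near $\tau=t$: the integral defining $\partial_x w_\varphi$ is \emph{not} absolutely integrable uniformly as $x\downarrow L_t$ in a manner that would let one simply move the limit inside, and the constant $-1$ is generated precisely there. The decomposition $I_1+I_2$ confines that singular contribution to the single explicit term $\varphi(t)\int_0^t G_x(x,t;L_t,\tau)\,d\tau$, while the Lipschitz bound on $L$ makes everything else harmless — it supplies the gain $(t-\tau)^{1/2}$ in $I_1$ and it renders $\int_0^t G_x(L_t,t;L_\tau,\tau)\varphi(\tau)\,d\tau$ genuinely convergent. This is where most of the work lies.
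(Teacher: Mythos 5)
Your proof is correct. The paper itself gives no argument for this lemma --- it simply invokes Lemma 14.2.5 of \cite{Cannon} --- so what you have written is a self-contained rendition of the classical jump relation for a single-layer heat potential, which is in substance the same argument that underlies Cannon's lemma. The steps all check out: for fixed $x>L_t$ differentiation under the integral sign is legitimate since the pole stays away from $x$ near $\tau=t$ and $|G_x|\le C/(t-\tau)$ away from the diagonal; the frozen-pole computation indeed gives $\int_0^t G_x(x,t;L_t,\tau)\,d\tau=-\,\mathrm{erfc}\left(\frac{x-L_t}{\sqrt{2t}}\right)\to -1$ as $x\downarrow L_t$; the Lipschitz bound $|L_t-L_\tau|\le A(t-\tau)$ combined with $|\partial_\xi G_x|\le C(t-\tau)^{-3/2}$ yields the $x$-uniform dominating bound $CA(t-\tau)^{-1/2}$ for $I_1$ and simultaneously shows that $\int_0^t G_x(L_t,t;L_\tau,\tau)\varphi(\tau)\,d\tau$ is absolutely convergent (the same estimate $|G_x(L_t,t;L_\tau,\tau)|\le A(2\pi(t-\tau))^{-1/2}$ used by the paper in Lemma \ref{3251}); and the split of $I_2$ only uses continuity of $\varphi$ at the single time $t>0$, which is guaranteed by $\varphi\in C_{(1)}^{0}((0,T])$, together with $\int_0^t|G_x(x,t;L_t,\tau)|\,d\tau\le 1$. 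Your version buys a proof that does not lean on an external reference and makes explicit where the constant $-\varphi(t)$ is generated (the diagonal $\tau=t$, via the erfc term); the paper's citation buys brevity, deferring exactly this bookkeeping to Cannon.
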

\begin{proof}
See Lemma 14.2.5. of \cite{Cannon}.
\end{proof}

\medskip

\begin{lem}
\label{3251}
Let $L\in \Sigma(A,T)$, then there exists a unique $\displaystyle\varphi\in C_{(1)}^{0}((0,T])$  such that\\ $\displaystyle\int_{b}^{\infty}h(\xi)G(L_t,t;\xi,0)d\xi-\varphi(t)+\int_{0}^{t}G_x(L_t,t;L_\tau,\tau)\varphi(\tau)d\tau=2e^{-t}$ for all $t\in(0,T]$.
\end{lem}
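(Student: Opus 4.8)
The plan is to recognize the equation for $\varphi$ as a linear Volterra integral equation of the second kind and solve it by the standard contraction/Picard iteration argument in $C_{(1)}^{0}((0,T])$. Writing
$$
f(t):=2e^{-t}-\int_{b}^{\infty}h(\xi)G(L_t,t;\xi,0)d\xi,
$$
the equation becomes $\varphi(t)=-f(t)+\int_{0}^{t}G_x(L_t,t;L_\tau,\tau)\varphi(\tau)d\tau$, i.e. $\varphi = -f + \mathcal{K}\varphi$ where $(\mathcal{K}\psi)(t):=\int_0^t G_x(L_t,t;L_\tau,\tau)\psi(\tau)\,d\tau$. First I would check that $f\in C_{(1)}^{0}((0,T])$: since $h\in C_c^1([b,\infty))$, the heat potential $\int_b^\infty h(\xi)G(L_t,t;\xi,0)\,d\xi$ is bounded by $\lVert h\rVert_\infty$ and continuous in $t$ on $(0,T]$ (indeed it extends continuously to $t=0$ because $L\in\Sigma(A,T)$ ensures $L_t\to b$ and $h$ is continuous), so $f$ has finite $\lVert\cdot\rVert_T^{(1)}$ norm.

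The heart of the matter is the kernel estimate. Using $L\in\Sigma(A,T)$, the Lipschitz bound $|L_t-L_\tau|\leq A(t-\tau)$ gives
$$
|G_x(L_t,t;L_\tau,\tau)| = \frac{|L_t-L_\tau|}{(t-\tau)\sqrt{2\pi(t-\tau)}}\exp\!\left\{-\frac{|L_t-L_\tau|^2}{2(t-\tau)}\right\}
\leq \frac{A}{\sqrt{2\pi}}\,\frac{1}{\sqrt{t-\tau}},
$$
so $\mathcal{K}$ has a weakly singular (Abel-type) kernel. Consequently $\lVert\mathcal{K}\psi\rVert_T^{(1)}\leq \frac{A}{\sqrt{2\pi}}\cdot 2\sqrt{T}\,\lVert\psi\rVert_T^{(1)}$, and more importantly the iterated kernels satisfy the usual gamma-function bound
$$
|\mathcal{K}^n\psi(t)| \leq \left(\tfrac{A}{\sqrt{2\pi}}\right)^n\frac{\Gamma(1/2)^n}{\Gamma(n/2+1)}\,t^{n/2}\,\lVert\psi\rVert_T^{(1)},
$$
which tends to $0$ as $n\to\infty$ for every fixed $T$. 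Hence $\sum_{n\geq 0}\mathcal{K}^n(-f)$ converges absolutely in $C_{(1)}^{0}((0,T])$ and defines the unique fixed point $\varphi$; equivalently, for $T$ small enough $\mathcal{K}$ is a contraction and one invokes the Banach fixed point theorem directly, while for general $T$ one splits $[0,T]$ into finitely many short subintervals (or uses the iterated-kernel series above, which needs no smallness). This is exactly the scheme used for heat-potential integral equations in \cite{Cannon}, Chapter 14.

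I expect the main obstacle to be purely technical bookkeeping rather than conceptual: verifying that $\mathcal{K}$ genuinely maps $C_{(1)}^{0}((0,T])$ into itself, i.e. that $t\mapsto\int_0^t G_x(L_t,t;L_\tau,\tau)\psi(\tau)\,d\tau$ is continuous on $(0,T]$ despite the $1/\sqrt{t-\tau}$ singularity and the mere continuity (not differentiability) of $L$. This follows from dominated convergence after the substitution $\tau = t-s^2$ or $\tau=ts$, which absorbs the singularity, together with continuity of $(t,\tau)\mapsto G_x(L_t,t;L_\tau,\tau)$ off the diagonal and the uniform bound above; continuity up to $t=0^+$ in the sense required by the norm is automatic since the bound forces $|\mathcal{K}\psi(t)|\leq C\sqrt{t}\to 0$. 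Uniqueness is immediate from the contraction property on small intervals (iterated to cover $[0,T]$) or from Gronwall applied to the difference of two solutions against the weakly singular kernel.
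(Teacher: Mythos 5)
Your proposal is correct and follows essentially the same route as the paper: rewrite the equation as a Volterra integral equation of the second kind in $C_{(1)}^{0}((0,T])$, use the Lipschitz bound from $L\in\Sigma(A,T)$ to get the weakly singular kernel estimate $|G_x(L_t,t;L_\tau,\tau)|\leq C/\sqrt{t-\tau}$, and conclude by a contraction/Picard argument as in Cannon. You merely supply details the paper delegates to Cannon (p.~247), and your iterated-kernel series additionally removes the smallness restriction on $T$, which is a minor strengthening rather than a different method.
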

\begin{proof}
Since $\displaystyle|G_x(L_t,t;L_\tau,\tau)|\leq\frac{C}{\sqrt{t-\tau}}$ and $\displaystyle\int_{b}^{\infty}h(\xi)G(L_t,t;\xi,0)d\xi\in C_{(1)}^{0}((0,T])$, we have a Volterra integral equation:
\begin{eqnarray}
\label{::}
\varphi(t)=\psi(t)+\int_{0}^{t}G_x(L_t,t;L_\tau,\tau)\varphi(\tau)d\tau
\end{eqnarray}
where $\displaystyle\psi(t)=-2e^{-t}+\int_{b}^{\infty}h(\xi)G(L_t,t;\xi,0)d\xi\in C_{(1)}^{0}((0,T])$.
As in \cite{Cannon} p.247 the right hand side of $\ref{::}$ defines a contraction map from $C_{(1)}^{0}((0,T])$ into itself for $T$ small. Thus there is a unique $\displaystyle\varphi\in C_{(1)}^{0}((0,T])$ which satisfies $(\ref{::})$.
\end{proof}
\medskip

 \begin{prop}
\label{96}
For any $L\in\Sigma(A,T)$ let $\varphi$ as in Lemma \ref{3251}.  Define
 \begin{equation}
 \label{???}
 v(x,t):=\int_{b}^{\infty}h(\xi)G(x,t;\xi,0)d\xi
+\int_{0}^{t}G_x(x,t;L_\tau,\tau)\varphi(\tau)d\tau
  \end{equation}
Then $v\in C^{2,1}(D_{L,T})$ satisfies from the first equation to the third one of $(**)$ and\\ $\displaystyle \lim_{x\rightarrow \infty}\sup_{0\leq t\leq T}|v(x,t)|=0$.  In addition, $v$ has the right derivative at the boundary $v_x(L_t,t)\in C_{(1)}^{0}((0,T])$.

\end{prop}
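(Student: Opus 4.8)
My plan is to treat the representation \reff{???} as a sum $v=V_0+w$, where $V_0(x,t):=\int_{b}^{\infty}h(\xi)G(x,t;\xi,0)\,d\xi$ is the heat potential of the initial datum and $w(x,t):=\int_{0}^{t}G_x(x,t;L_\tau,\tau)\varphi(\tau)\,d\tau=\partial_x w_\varphi(x,t)$ is the $x$-derivative of the single-layer heat potential carried by the moving curve, and to verify the claims of the proposition in turn. For the smoothness and the equation: since $h$ is bounded with compact support and $G$ is $C^{\infty}$ for $t>\tau$, differentiating under the integral sign gives $V_0\in C^{\infty}(\R\times(0,T])$ with $\partial_t V_0=\frac12\partial_{xx}V_0$; for $w$ I would use that the curve $\{(L_\tau,\tau):0\le\tau\le T\}$ is disjoint from the open set $D_{L,T}$ (points of $D_{L,T}$ have first coordinate strictly larger than $L_t$) and that, for $(x,t)\in D_{L,T}$, the factor $\exp\{-|x-L_\tau|^2/(2(t-\tau))\}$ decays faster than any power of $(t-\tau)$ as $\tau\uparrow t$, so all the negative powers of $(t-\tau)$ produced by differentiating the kernel are absorbed and $w_\varphi\in C^{\infty}(D_{L,T})$, hence $w=\partial_x w_\varphi\in C^{\infty}(D_{L,T})$. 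Since $G$ and $G_x$ solve the heat equation in $(x,t)$, so do $V_0$, $w_\varphi$, $w$, and therefore $v\in C^{\infty}(D_{L,T})\subset C^{2,1}(D_{L,T})$ satisfies $v_t=\frac12 v_{xx}$.

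Next I would verify the remaining three conditions — the initial datum, the decay at infinity, and the boundary value. Clearly $w(x,0)=0$, while the approximate-identity property of the Gaussian kernel gives $\lim_{t\downarrow 0}V_0(x,t)=h(x)$ for $x>b$, so $v(x,0)=h(x)$; moreover $V_0(b,0^{+})=\frac12 h(b)$ and, by the jump relation below, $\lim_{t\downarrow 0}w(L_t,t)=-\varphi(0^{+})=2-\frac12 h(b)$, so $v$ takes the value $2$ at the corner $(b,0)$, matching the initial datum exactly when $h(b)=2$, i.e. in the case $h=\rho_0'$ relevant for Theorem \ref{34}. For the behaviour at infinity, if $\mathrm{supp}\,h\subset[b,R]$ and $|L_\tau-b|\le AT$, then for large $x$ both $V_0(x,t)$ and $w(x,t)$ are dominated uniformly in $t\in[0,T]$ by a Gaussian tail $C(\|h\|_{\infty},\lVert\varphi\rVert_{T}^{(1)},A,R,T)\,\exp\{-c\,\mathrm{dist}(x,[b-AT,R])^{2}/T\}$; for the $w$-term this follows from the elementary estimate $\int_{0}^{T}\sigma^{-3/2}e^{-a^{2}/(4\sigma)}\,d\sigma\le C/a$ after substituting $\sigma=t-\tau$. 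Hence $\lim_{x\to\infty}\sup_{0\le t\le T}|v(x,t)|=0$. Finally, the jump relation (Lemma \ref{45}) gives $\lim_{x\to L_t^{+}}w(x,t)=-\varphi(t)+\int_{0}^{t}G_x(L_t,t;L_\tau,\tau)\varphi(\tau)\,d\tau$, and adding $V_0(L_t,t)$ and using the Volterra equation that defines $\varphi$ (Lemma \ref{3251}) yields $\lim_{x\to L_t^{+}}v(x,t)=2e^{-t}$.

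The heart of the proof — and the step I expect to be the main obstacle — is the last assertion: that the one-sided spatial derivative $v_x(L_t,t):=\lim_{x\to L_t^{+}}v_x(x,t)$ exists, is continuous on $(0,T]$, and lies in $C^{0}_{(1)}((0,T])$. Writing $v_x=\partial_x V_0+\partial_{xx}w_\varphi$ and integrating by parts in $\xi$ in the first term (using $G_x=-G_\xi$ and $h\in C_c^{1}$) I get $\partial_x V_0(x,t)=h(b)\,G(x,t;b,0)+\int_{b}^{\infty}h'(\xi)G(x,t;\xi,0)\,d\xi$, whose limit at $x=L_t$ is continuous on $(0,T]$ but grows like $h(b)/\sqrt{2\pi t}$ as $t\downarrow 0$. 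The term $\partial_{xx}w_\varphi$ cannot be handled by simply setting $x=L_t$, since its integrand $G_{xx}(x,t;L_\tau,\tau)$ carries a non-integrable $(t-\tau)^{-3/2}$ singularity along the curve; instead I would use $G_{xx}=2G_t$ together with add-and-subtract manipulations exploiting the Lipschitz bound $|L_t-L_\tau|\le A|t-\tau|$, and then apply the Cannon heat-potential estimates (the same machinery underlying Lemma \ref{45}, see \cite{Cannon}, Section 14.2) to show that $\lim_{x\to L_t^{+}}\partial_{xx}w_\varphi(x,t)$ exists, is continuous on $(0,T]$, and — because $\varphi\in C^{0}_{(1)}((0,T])$ and the data are compatible at the corner, $h(b)=2=2e^{0}$ — cancels the $h(b)/\sqrt{2\pi t}$ growth of $\partial_x V_0$, leaving $v_x(L_t,\cdot)$ bounded on $(0,T]$. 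Thus the real work is in transferring the spatial derivative to the moving boundary — a genuine singular-integral computation rather than a differentiation under the integral sign — and in tracking the two $1/\sqrt t$-type contributions so as to see their cancellation, which is precisely what makes $v_x(L_t,\cdot)\in C^{0}_{(1)}((0,T])$.
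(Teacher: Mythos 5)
Your treatment of the routine parts (interior smoothness and the heat equation by differentiating under the integral, the initial datum, the Gaussian-tail decay at infinity, and the boundary value $v(L_t,t)=2e^{-t}$ via Lemma \ref{45} plus the Volterra equation of Lemma \ref{3251}) is fine and matches what the paper takes for granted. The crux, however --- existence, continuity and boundedness of the one-sided derivative $v_x(L_t,t)$ --- is exactly the step you leave as a sketch, and the sketch has a genuine gap. You propose to compute $\lim_{x\to L_t^{+}}\int_0^t G_{xx}(x,t;L_\tau,\tau)\varphi(\tau)\,d\tau$ directly, invoking ``the same machinery underlying Lemma \ref{45}''. But the Section 14.2 jump relation concerns the \emph{first} derivative of a single-layer potential with a bounded continuous density; it does not give the boundary limit of this double-layer-type term. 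If you follow your own recipe ($G_{xx}=2G_t$, write $G_t=-\partial_\tau G(x,t;L_\tau,\tau)+\dot L_\tau G_\xi$, add and subtract $\varphi(t)$), the surviving singular piece is of the form $\int_0^t (t-\tau)^{-3/2}\,|\varphi(\tau)-\varphi(t)|\,d\tau$ near the curve, which converges only if $\varphi$ has a Dini/H\"older modulus of continuity; Lemma \ref{3251} gives $\varphi$ merely bounded and continuous on $(0,T]$, and you neither prove nor use any extra regularity of $\varphi$. So the existence of the boundary limit --- the whole content of the last assertion of the Proposition --- is not established by your argument as written.

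The paper takes a different route precisely to avoid this computation: using the decay at infinity it fixes $R>\sup_{[0,T]}L_t$, regards $v$ as the solution of the first boundary value problem in the bounded non-cylindrical domain $\{L_t<x<R\}$ with continuous data $2e^{-t}$ and $v(R,\cdot)\in C^{0}_{(1)}((0,T])$, and quotes Cannon's Theorem 14.4.1 and Lemma 14.4.4, which deliver the existence and $C^{0}_{(1)}$ regularity of $v_x$ at a Lipschitz moving boundary directly. If you want a self-contained argument you should either import those results as the paper does, or first upgrade the regularity of $\varphi$ (e.g.\ H\"older continuity from the Volterra equation) before attempting the singular-integral limit. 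One point in your favour: your observation about the corner compatibility $h(b)=2$ is pertinent --- without it $v_x(L_t,t)$ behaves like $c/\sqrt t$ as $t\downarrow 0$ and is not bounded, and indeed the paper's later identity \eqref{3023} silently drops the boundary terms $(h(b)-2)G(L_t,t;b,0)$ that only cancel under this compatibility, which does hold in the application $h=\rho_0'$ of Theorem \ref{34}. But flagging the cancellation does not substitute for proving that the boundary limit exists in the first place.
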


\medskip
\begin{proof} \\
By \eqref{???} $v$ satisfies the heat equation with initial datum $h$ and  boundary conditions $\displaystyle v(L_t,t)=2e^{-t}$, $\displaystyle \lim_{x\rightarrow \infty}\sup_{0\leq t\leq T}|v(x,t)|=0$. Let us choose $\displaystyle R>\sup_{t\in [0,T]} L_t$.
Then we have $v(R,t)\in C_{(1)}^{0}((0,T])$ such that by Lemma 14.4.4. and Theorem 14.4.1. of \cite{Cannon}, we also obtain
 $v_x(L_t,t)\in C_{(1)}^{0}((0,T])$.

 \end{proof}

\section{Proof of  Theorem \ref{3424}}
Theorem \ref{3424} is proved at the end of the section.

\medskip

\begin{lem}
\label{907}
Let $L\in\Sigma(A,T)$ and $v$ as in \eqref{???}, then there are 
positive constants $C_1, C_2, C_3$ such that
\begin{eqnarray}
\displaystyle \left|v_x(L_t,t)\right|\leq C_{1}\int_{0}^{t}\frac{\left|v_x(L_\tau,\tau)\right|}{\sqrt{t-\tau}}d\tau+(C_{2}\sqrt{t}+C_{3}).
\end{eqnarray}
Moreover, we have $v_x(L_t,t)\in C([0,T])$.
\end{lem}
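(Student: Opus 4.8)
The plan is to derive the stated inequality from a Green–type representation of $v$ along the moving boundary, reading off the Volterra structure from the jump relation of Lemma \ref{45} and the Lipschitz character of $L\in\Sigma(A,T)$. First I would apply Green's identity for the pair $(v,G)$ over the non‑cylindrical domain $\{(\xi,\tau):L_\tau<\xi,\ 0<\tau<t\}$: using that $v$ solves the heat equation there, that $v(L_\tau,\tau)=2e^{-\tau}$ and $v(\xi,0)=h(\xi)$, that $v,v_x\to0$ as $\xi\to\infty$ uniformly in $\tau$ (Proposition \ref{96}), and that $L$ is Lipschitz so that the moving side contributes the $\dot L_\tau$–terms, one obtains, for $x>L_t$,
\[
\begin{aligned}
v(x,t) &= \int_b^\infty h(\xi)G(x,t;\xi,0)\,d\xi-\frac12\int_0^t G(x,t;L_\tau,\tau)v_x(L_\tau,\tau)\,d\tau\\
&\qquad {}-\int_0^t G(x,t;L_\tau,\tau)\,2e^{-\tau}\dot L_\tau\,d\tau-\int_0^t G_x(x,t;L_\tau,\tau)\,e^{-\tau}\,d\tau .
\end{aligned}
\]
(The representation \eqref{???} is an alternative starting point, but the boundary densities appearing there are only $C^0$ in time, whereas here all the densities except $v_x(L_\cdot,\cdot)$ are explicit and smooth, which is convenient.)

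Next I would differentiate in $x$ and let $x\to L_t^+$. In the second integral, since $v_x(L_\cdot,\cdot)\in C_{(1)}^0((0,T])$ by Proposition \ref{96}, Lemma \ref{45} applies and produces $\tfrac12 v_x(L_t,t)$ — which is moved to the left–hand side — together with $-\tfrac12\int_0^t G_x(L_t,t;L_\tau,\tau)v_x(L_\tau,\tau)\,d\tau$; here $|L_t-L_\tau|\le A(t-\tau)$ forces $|G_x(L_t,t;L_\tau,\tau)|\le C(t-\tau)^{-1/2}$ (this is where one‑dimensionality and the class $\Sigma(A,T)$ enter), so this last term is bounded in modulus by $C_1\int_0^t|v_x(L_\tau,\tau)|(t-\tau)^{-1/2}\,d\tau$. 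The third integral is handled the same way: $\|\dot L\|_\infty\le A$ together with the same kernel bound gives a contribution $\le C\sqrt t$ plus a bounded jump term, producing $C_2\sqrt t$ and part of $C_3$. It remains to control the $x\to L_t^+$ limits of $\partial_x\int_b^\infty h(\xi)G(x,t;\xi,0)\,d\xi$ and of $-\int_0^t G_{xx}(x,t;L_\tau,\tau)e^{-\tau}\,d\tau$, whose sum must be shown bounded on $(0,T]$; solving for $v_x(L_t,t)$ then yields the inequality.

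That last point is the main obstacle. Taken separately these two terms blow up like $\pm h(b)(2\pi t)^{-1/2}$ as $t\to0^+$ — the first because $h$, extended by $0$, has a jump of size $h(b)$ at $b$, the second because for small $t$ one has $L_\tau\approx b$ and $\int_0^t G_{xx}(x,t;b,\tau)\,d\tau=2G(x,t;b,0)$ by the telescoping identity $G_{xx}=-2G_\tau$ — and they cancel precisely thanks to the compatibility $h(b)=\rho_0'(b)=2=v(L_0,0)$ built into the data. Making this cancellation rigorous with the genuinely moving $L_\tau$ and the factor $e^{-\tau}$, and deducing that the sum is bounded by a constant $C_3$ depending only on $\|h\|_{C^1}$, $A$ and $T$, is exactly the content of the Cannon boundary–flux estimates (Lemma 14.4.4 and Theorem 14.4.1 of \cite{Cannon}, already invoked in Proposition \ref{96}), which I would quote here. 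The same estimates give that $v_x(L_t,t)$ is continuous on $(0,T]$ and has a finite limit as $t\to0^+$ (namely $h'(b)=\rho_0''(b)$), hence extends to an element of $C([0,T])$; alternatively, once the weakly singular Volterra inequality is in hand, one may re‑run the contraction argument of Lemma \ref{3251} in $C([0,T])$ rather than only in $C_{(1)}^0((0,T])$ to reach the same conclusion.
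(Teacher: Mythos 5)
Your route is the same as the paper's: a Green's identity representation of $v$ on the moving domain, differentiation in $x$, the jump relation of Lemma \ref{45} at $x\to L_t^+$, and the bound $|G_x(L_t,t;L_\tau,\tau)|\le C(t-\tau)^{-1/2}$ coming from $|L_t-L_\tau|\le A(t-\tau)$, which produces the Volterra term. Your starting representation (keeping the single‑layer term with density $2e^{-\tau}\dot L_\tau$) is in fact the complete form of the paper's \eqref{777}, so up to that point you are on track. The problem is the decisive step, the constant $C_3$: you correctly observe that $\partial_x\int_b^\infty hG\,d\xi$ and $-\int_0^t e^{-\tau}G_{xx}\,d\tau$ are separately of order $t^{-1/2}$ and that the singular parts should cancel via $h(b)=2$, but you do not prove that their sum is bounded — you outsource it to Cannon's Lemma 14.4.4/Theorem 14.4.1. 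Those results, as used in Proposition \ref{96}, give only the qualitative statement $v_x(L_\cdot,\cdot)\in C^0_{(1)}((0,T])$ for the given curve; they do not supply a bound of the announced form with constants depending only on $\|h\|_{C^1}$, $A$, $T$, and that uniformity over $\Sigma(A,T)$ is precisely what Lemma \ref{907} exists to provide (Lemma \ref{567} fixes $A>C_3/4$ before $T$ is chosen). Quoting at this point a bound on $v_x(L_\cdot,\cdot)$ itself would also make the lemma circular.

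The paper closes exactly this step by an elementary exact computation rather than an asymptotic cancellation: integrate by parts \emph{before} estimating. Writing $\partial_x\int_b^\infty hG\,d\xi=\int_b^\infty h_\xi G\,d\xi+h(b)G(x,t;b,0)$, and using your own identity $G_{xx}=-2G_\tau$ together with $\frac{d}{d\tau}G(x,t;L_\tau,\tau)=G_\xi\dot L_\tau+G_\tau$, one gets for $x>L_t$
\begin{equation*}
-\int_0^t e^{-\tau}G_{xx}(x,t;L_\tau,\tau)\,d\tau=-2G(x,t;b,0)+2\int_0^t e^{-\tau}G(x,t;L_\tau,\tau)\,d\tau-2\int_0^t e^{-\tau}G_\xi(x,t;L_\tau,\tau)\dot L_\tau\,d\tau ,
\end{equation*}
so with $h(b)=2$ the terms $G(x,t;b,0)$ cancel identically (not just as $t\to0$), and the leftover $\dot L_\tau$ integral cancels exactly against the $x$-derivative of your third integral. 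What remains is precisely \eqref{3023}: the Volterra term, $2\int_0^t e^{-\tau}G\,d\tau\le C_2\sqrt t$, and $\bigl|\int_b^\infty h_\xi G\,d\xi\bigr|\le\|h_\xi\|_\infty=C_3$ — manifestly uniform in $L\in\Sigma(A,T)$. This also removes the need to apply the jump relation to the density $2e^{-\tau}\dot L_\tau$, which is only $L^\infty$ for a generic $L\in\Sigma(A,T)$ while Lemma \ref{45} requires a continuous $\varphi$, and the continuity of $v_x(L_t,t)$ on all of $[0,T]$ is then read off from \eqref{3023} rather than from the Cannon citations. So: right architecture, correct identification of the cancellation mechanism, but the key bounded term is asserted, not proved; carrying out the integration by parts you already have in hand is what completes the argument.
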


\medskip

\begin{proof}
\\
 Let us fix $(x,t)\in D_{L,T}$ and let us define $\displaystyle D_{\epsilon,R}^{(t)}:=\{(\xi,\tau): L_\tau+\epsilon<\xi<R,\ \epsilon<\tau<t-\epsilon\}$ for each $\epsilon>0$ and $R\in\mathbb{R}$. By the Green's identity, we have
\begin{eqnarray}
 \displaystyle \frac{1}{2}(v_\xi G-vG_\xi)_\xi-(vG)_{\tau}=0\ \Longrightarrow\  \ \oint_{\partial D_{\epsilon,R}^{(t)}} \frac{1}{2}(v_\xi G-vG_\xi)d\tau+(vG)d\xi=0.
\end{eqnarray}
 Hence we obtain another representation of $v$ by letting $\epsilon\rightarrow 0$, $R\rightarrow \infty$,
\begin{eqnarray}
\label{777}
&& v(x,t)=\int_{b}^{\infty}h(\xi)G(x,t;\xi,0)d\xi-
\frac{1}{2}\int_{0}^{t}G(x,t;L_\tau,\tau)
v_\xi(L_\tau,\tau)d\tau\nn\\&& \hskip2cm +\int_{0}^{t}e^{-\tau}G_\xi(x,t;L_\tau,\tau)d\tau.
\end{eqnarray}
Differentiating both sides of $(\ref{777})$ with respect to x  and  integrating by parts, we get
\begin{eqnarray}
&& v_x(x,t)=\int_{b}^{\infty}h_{\xi}(\xi)G(x,t;\xi,0)d\xi-
\frac{1}{2}\int_{0}^{t}G_x(x,t;L_\tau,\tau)v_x(L_\tau,\tau)d\tau\nn \\
&&\hskip2cm +2\int_{0}^{t}e^{-\tau}G(x,t;L_\tau, \tau)d\tau,
\end{eqnarray}

\begin{eqnarray}
\label{3023}&& \frac{1}{2}v_x(L_t,t)=\int_{b}^{\infty}h_{\xi}(\xi)
G(L_t,t;\xi,0)d\xi-\frac{1}{2}
\int_{0}^{t}G_x(L_t,t;L_\tau,\tau)v_x(L_\tau,\tau)d\tau\nn\\&&\hskip2cm
+2\int_{0}^{t}e^{-\tau}G(L_t,t;L_\tau, \tau)d\tau.
\end{eqnarray}
Since $\displaystyle \left|G_x(L_t,t;L_\tau,\tau)\right|\leq \frac{C}{\sqrt{t-\tau}}$ and there are constants $C_2$ and $C_3$ such that
\begin{eqnarray}
\left|2\int_{0}^{t}e^{-\tau}G(x,t;L_\tau, \tau)d\tau\right|\leq C_2\sqrt{t},\ \ \left|\int_{b}^{\infty}h_{\xi}(\xi)G(x,t;\xi,0)d\xi\right|\leq\lVert h_{\xi}\rVert_{\infty}\leq C_3,
\end{eqnarray}
then
\begin{eqnarray}
\label{345}
\displaystyle \left|v_x(L_t,t)\right|\leq (C_{2}\sqrt{t}+C_{3})+C_{1}\int_{0}^{t}\frac{\left|v_x(L_\tau,\tau)\right|}{\sqrt{t-\tau}}d\tau.
\end{eqnarray}
Using (\ref{3023}), we have
\begin{eqnarray}
\label{40303}
v_x(L_t,t)\in C([0,T]).
\end{eqnarray}

\end{proof}

\medskip

\begin{lem}
\label{567}

Let $A>\dfrac{C_3}{4}$,   $C_3$ as in  Lemma \ref{907}. Then for all sufficiently small $T$,   $\displaystyle K[L](t):=b-\frac{1}{4}\int_{0}^{t}e^{\tau}v_x(L_\tau,\tau)d\tau$, $0\leq t\leq T$, maps  $K:\Sigma(A,T)\longrightarrow\Sigma(A,T)$.

   \end{lem}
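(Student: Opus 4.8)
The goal is to show that, for $A > C_3/4$ and $T$ sufficiently small, the map $K$ sends $\Sigma(A,T)$ into itself. Since $K[L](0) = b$ is immediate, the only thing to verify is the Lipschitz bound: for $L \in \Sigma(A,T)$ and $0 \le t_1 < t_2 \le T$,
\[
\left| \frac{K[L](t_2) - K[L](t_1)}{t_2 - t_1} \right| = \frac{1}{4(t_2-t_1)} \left| \int_{t_1}^{t_2} e^\tau v_x(L_\tau,\tau)\, d\tau \right| \le \frac{1}{4} \sup_{0 \le \tau \le T} e^\tau \left| v_x(L_\tau,\tau) \right| \le \frac{e^T}{4} \sup_{0 \le \tau \le T} \left| v_x(L_\tau,\tau) \right|.
\]
So it suffices to bound $\sup_{0\le t\le T}|v_x(L_t,t)|$ by a quantity that tends to $C_3$ (or at worst something below $4A$) as $T \to 0$; then choosing $T$ small makes $\frac{e^T}{4}\sup |v_x| \le A$.

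\textbf{Key step: Gronwall on the integral inequality.} Lemma \ref{907} gives
\[
\left| v_x(L_t,t) \right| \le C_1 \int_0^t \frac{|v_x(L_\tau,\tau)|}{\sqrt{t-\tau}}\, d\tau + (C_2 \sqrt{t} + C_3).
\]
Write $g(t) = |v_x(L_t,t)|$, which is continuous on $[0,T]$ by the last assertion of Lemma \ref{907}, hence bounded; set $M_T = \sup_{0\le t \le T} g(t)$. Taking the sup over $t \in [0,T]$ on both sides and using $\int_0^t (t-\tau)^{-1/2} d\tau = 2\sqrt{t} \le 2\sqrt{T}$ gives
\[
M_T \le C_1 \cdot 2\sqrt{T}\, M_T + C_2 \sqrt{T} + C_3,
\]
so for $T$ small enough that $2C_1\sqrt{T} \le \tfrac12$ we get $M_T \le 2(C_2\sqrt{T} + C_3)$. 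This is the crude route; it would force the condition $A > C_3/2$ rather than $A > C_3/4$. To get the sharper constant $C_3/4$ claimed in the statement, one should instead iterate the integral inequality (a singular Gronwall / Picard iteration argument, as in the standard treatment of Abel-type Volterra inequalities): substituting the inequality into itself once converts the singular kernel $(t-\tau)^{-1/2}$ composed with itself into a bounded kernel (a Beta-function computation gives $\int_\tau^t (t-s)^{-1/2}(s-\tau)^{-1/2}\, ds = \pi$), and iterating yields $g(t) \le (C_2\sqrt t + C_3)\, E(t)$ where $E(t) \to 1$ as $t \to 0$ (an explicit Mittag-Leffler-type series in powers of $C_1\sqrt{t}$). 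Hence $\limsup_{T\to 0} M_T \le C_3$, and for $T$ small $M_T < 4A$ since $A > C_3/4$.

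\textbf{Conclusion.} Combining, for $T$ small enough (small enough for the Volterra contraction in Lemma \ref{3251}, small enough for Lemma \ref{907}, small enough that $2C_1\sqrt T \le \tfrac12$, and small enough that $\tfrac{e^T}{4} M_T \le A$), we obtain both $K[L](0)=b$ and the Lipschitz constant of $K[L]$ bounded by $A$, so $K[L] \in \Sigma(A,T)$. Note also that $v_x(L_\cdot,\cdot) \in C([0,T])$ guarantees $K[L]$ is genuinely $C^1$, so the codomain constraint is meaningful.

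\textbf{Main obstacle.} The one delicate point is squeezing out the constant $C_3/4$ rather than a larger multiple of $C_3$: the naive sup-Gronwall loses a factor of $2$, so one genuinely needs the iterated (Mittag-Leffler) form of the singular Gronwall inequality to see that the right-hand side of Lemma \ref{907} converges to $C_3$ as $T \to 0$. Everything else — the estimate on $K[L](t_2)-K[L](t_1)$, the continuity of $v_x$ on the boundary — is supplied directly by Lemma \ref{907} and elementary bounds.
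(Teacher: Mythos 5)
Your proposal is correct and follows essentially the same route as the paper: bound the Lipschitz constant of $K[L]$ by $\frac{e^T}{4}\sup_{0\le t\le T}|v_x(L_t,t)|$, apply a singular (Abel-kernel) Gronwall inequality to the estimate of Lemma \ref{907} so that the bound tends to $C_3$ as $T\to 0$, and use $A>C_3/4$ to conclude for small $T$. The only difference is presentational: the paper simply invokes Lemma 17.7.1 of Cannon, which yields exactly the bound $[1+2C_1\sqrt{T}]\exp\{\pi C_1^2 T\}(C_2\sqrt{t}+C_3)$ that your iterated (Beta-function/Mittag-Leffler) Gronwall argument reproduces, so your observation that the naive sup-Gronwall would lose a factor of $2$ is a fair account of why that sharper lemma is needed.
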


   \medskip
\begin{proof}
By  Lemma 17.7.1. in \cite{Cannon} applied to (\ref{345}),  we obtain
\begin{eqnarray}
\label{09}
\displaystyle \left|v_x(L_t,t)\right|\leq [1+2C_1\sqrt{T}]\exp\{\pi C_1^{2}T\}(C_2\sqrt{t}+C_3).
\end{eqnarray}
Then $\displaystyle\left|\frac{d}{dt}K[L](t)\right|=\frac{1}{4}\left|e^{t}v_x(L_t,t)\right|\leq \frac{1}{4}e^{T} [1+2C_1\sqrt{T}]\exp\{\pi C_1^{2}T\}(C_2\sqrt{T}+C_3)\leq A$ for all sufficiently small $T$ so that the map $K$ is well defined.
\end{proof}
\\

\medskip

 Hereafter   $A>\dfrac{C_3}{4}$ is fixed. We will show that $K$ is continuous, then, since $\Sigma(A,T)$ is convex and compact we can apply the Schauder fixed point theorem to conclude that $K$ has a fixed point.

 \medskip

\begin{lem}
\label{637}
For any sufficiently small $T>0$ the map $K$ defined in  Lemma \ref{567} is continuous on $\Sigma(A,T)$ with sup norm.
\end{lem}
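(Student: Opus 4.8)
The plan is to show that if $L^{(n)}\to L$ uniformly in $\Sigma(A,T)$, then $K[L^{(n)}]\to K[L]$ uniformly; since $\left|K[L^{(n)}](t)-K[L](t)\right|\le \frac14 e^{T}\int_0^T\left|v^{(n)}_x(L^{(n)}_\tau,\tau)-v_x(L_\tau,\tau)\right|d\tau$, it suffices to prove that the boundary flux $\tau\mapsto v^{(n)}_x(L^{(n)}_\tau,\tau)$ converges to $\tau\mapsto v_x(L_\tau,\tau)$ uniformly (or at least in $L^1(0,T)$). Here $v^{(n)}$ is the function attached to $L^{(n)}$ by Proposition \ref{96}, and $\varphi^{(n)}$ the corresponding density from Lemma \ref{3251}. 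The natural route is: first prove $\varphi^{(n)}\to\varphi$ in $C^0_{(1)}((0,T])$, then feed this into the integral representation \eqref{3023} to get convergence of $v^{(n)}_x(L^{(n)}_\cdot,\cdot)$.

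For the first step I would subtract the Volterra equations \eqref{::} for $L^{(n)}$ and $L$. Writing $\delta=\varphi^{(n)}-\varphi$, one gets
\[
\delta(t)=\big(\psi^{(n)}(t)-\psi(t)\big)+\int_0^t G_x(L^{(n)}_t,t;L^{(n)}_\tau,\tau)\delta(\tau)\,d\tau+\int_0^t\big(G_x(L^{(n)}_t,t;L^{(n)}_\tau,\tau)-G_x(L_t,t;L_\tau,\tau)\big)\varphi(\tau)\,d\tau.
\]
The kernel term has the $\frac{C}{\sqrt{t-\tau}}$ bound uniformly in $n$ (since all curves lie in $\Sigma(A,T)$), so for $T$ small the map $\delta\mapsto \int_0^t G_x\,\delta$ is a contraction on $C^0_{(1)}$ uniformly in $n$; hence $\lVert\delta\rVert^{(1)}_T$ is controlled by the other two terms. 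The driving term $\psi^{(n)}-\psi=\int_b^\infty h(\xi)\big(G(L^{(n)}_t,t;\xi,0)-G(L_t,t;\xi,0)\big)d\xi$ tends to $0$ uniformly because $h$ has compact support and $G(\cdot,t;\xi,0)$ is uniformly continuous in its first argument for $t$ bounded below — but note $t$ is \emph{not} bounded below, so near $t=0$ one uses instead that both terms are uniformly small there (the factor $h$ is bounded and the initial layer contributes a small amount). The last term is handled the same way: $G_x$ is jointly continuous off the diagonal and its singularity as $\tau\to t$ is integrable, so dominated convergence (with the uniform $\frac{C}{\sqrt{t-\tau}}$ envelope) gives uniform-in-$t$ convergence to $0$, using $L^{(n)}_t\to L_t$ and $L^{(n)}_\tau\to L_\tau$ pointwise with the $\frac{C}{\sqrt{t-\tau}}$ domination.

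Once $\varphi^{(n)}\to\varphi$ in $C^0_{(1)}((0,T])$, I would pass to the flux. Rather than differentiating \eqref{???} directly, it is cleaner to use the representation \eqref{3023}, which expresses $v_x(L_t,t)$ through $h_\xi$, through $\int_0^t G_x(L_t,t;L_\tau,\tau)v_x(L_\tau,\tau)d\tau$, and through $2\int_0^t e^{-\tau}G(L_t,t;L_\tau,\tau)d\tau$; this is again a Volterra equation for $v_x(L_\cdot,\cdot)$ with the same singular-but-integrable kernel, so by the same contraction-plus-continuity argument (the $h_\xi$ term and the $G$ term converge uniformly by compact support of $h_\xi$ and dominated convergence, exactly as above) one concludes $v^{(n)}_x(L^{(n)}_\cdot,\cdot)\to v_x(L_\cdot,\cdot)$ in $C([0,T])$ — continuity up to $t=0$ being available from Lemma \ref{907}. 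This yields $\int_0^T\left|v^{(n)}_x(L^{(n)}_\tau,\tau)-v_x(L_\tau,\tau)\right|d\tau\to 0$ and hence $\lVert K[L^{(n)}]-K[L]\rVert_\infty\to 0$.

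The main obstacle is the behavior near $\tau=0$ and $t=0$: every estimate that would be routine "away from the initial time" has to be done with care because $C^0_{(1)}((0,T])$ only controls the sup norm, not any modulus of continuity down to $0$, and the Gaussian kernel $G(L_t,t;\xi,0)$ genuinely has an initial-layer singularity. The trick is to split each troublesome integral into $\int_0^\eta$ and $\int_\eta^t$: on $\int_0^\eta$ one uses the uniform $L^\infty$ bounds on $\varphi^{(n)}, h, h_\xi$ together with the integrability of $\frac{1}{\sqrt{t-\tau}}$ to make the contribution $\le C\sqrt{\eta}$ uniformly in $n$ and in the comparison, while on $\int_\eta^t$ the integrand stays in a compact region away from the diagonal where $G, G_x$ are uniformly continuous, so pointwise convergence of the curves plus dominated convergence finishes it. Choosing $\eta$ first and $n$ large second gives the uniform convergence. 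All constants are the ones already produced in Lemmas \ref{45}, \ref{3251}, \ref{907}, so no new estimate on $G$ is needed beyond what the excerpt already invokes from \cite{Cannon}.
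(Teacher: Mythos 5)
Your route is genuinely different from the paper's, and it can be made to work. The paper never differences the flux equation at all: it applies Green's identity with the weight $\xi$ (a first--moment identity) on $D^{(t)}_{\epsilon,R}$, which expresses $\sigma^{(1)}_t-\sigma^{(2)}_t=K[L^{(1)}](t)-K[L^{(2)}](t)$ directly through differences of $G$, of $\Psi$, and of the moments $\int x\,G\,dx$ evaluated along the two curves; these differ only by $O\bigl(\sup|L^{(1)}-L^{(2)}|/\sqrt{t-\tau}\bigr)$, so Cannon's singular Gronwall lemma (Lemma 17.7.1) yields the quantitative Lipschitz bound $\sup|\sigma^{(1)}-\sigma^{(2)}|\le C\sup|L^{(1)}-L^{(2)}|$. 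The point of that detour is exactly the obstruction your scheme skirts: a quantitative comparison of the kernels in \eqref{::} or \eqref{3023} via the mean value theorem produces $|G_x(L^{(1)}_t,t;L^{(1)}_\tau,\tau)-G_x(L^{(2)}_t,t;L^{(2)}_\tau,\tau)|\lesssim \sup|L^{(1)}-L^{(2)}|\,(t-\tau)^{-3/2}$, a non--integrable singularity, so no Lipschitz estimate comes out of differencing the Volterra equations. You avoid this by aiming only at sequential continuity (which indeed suffices for Schauder, since $\Sigma(A,T)$ is metric): with $\varphi$, respectively $w=v_x(L_\cdot,\cdot)$, held fixed and bounded, the kernel--difference terms need only tend to $0$ uniformly, not at a Lipschitz rate, and the split ``uniform $C/\sqrt{t-\tau}$ envelope near the singularity, uniform continuity of $G,G_x$ on a compact set away from it'' delivers that. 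Two repairs are needed in your write--up: dominated convergence by itself gives only pointwise--in--$t$ convergence, so the uniformity must come from the splitting; and for the kernel terms the split has to be taken at $\tau=t-\eta$ (the singularity sits on the diagonal $\tau=t$), not at $\tau=\eta$ — as written, the claim that on $\int_\eta^t$ one is ``away from the diagonal'' is false, while the $\int_0^\eta$/$\int_\eta^T$ split is the right one only for the initial--layer terms $\int h\,G(\cdot,t;\xi,0)\,d\xi$ and $\int h_\xi\,G(\cdot,t;\xi,0)\,d\xi$ at small $t$ (there one uses $|L^{(n)}_t-L_t|\le 2At$ and $\int|G_x(y,t;\xi,0)|\,d\xi\le C/\sqrt t$ to get a bound $C\sqrt\eta$ uniform in $n$). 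Also note that your first step ($\varphi^{(n)}\to\varphi$) is redundant: \eqref{3023} is already a closed Volterra equation for the boundary flux, so the second step alone, combined with the uniform bound of Lemma \ref{907}, gives $v^{(n)}_x(L^{(n)}_\cdot,\cdot)\to v_x(L_\cdot,\cdot)$ in $C([0,T])$ and hence $\lVert K[L^{(n)}]-K[L]\rVert_\infty\to0$. In exchange for being more elementary, your argument gives only continuity, whereas the paper's moment identity gives the stronger Lipschitz dependence of $K[L]$ on $L$.
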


\medskip

\begin{proof}
\\Let $T>0$ be such that $0<b-AT$ and so small that $K$ is well defined. For $L\in\Sigma(A,T)$, we have $0<b-AT\leq\inf L\leq \sup L\leq b+AT$. Let $v$ be the function determined by $L$ via Proposition \ref{96}. Then $v$ satisfies Green's identity as follows; for $\displaystyle D_{\epsilon,R}^{(t)}$ defined in the proof of  Lemma \ref{907},
\begin{eqnarray}
\label{34512}\oint_{\partial D_{\epsilon,R}^{(t)}}[\xi vd\xi+\frac{1}{2}(\xi v_\xi-v)d\tau]=0.
\end{eqnarray}
 Let $\displaystyle v^{(1)}, v^{(2)}$ be the functions which correspond to $\displaystyle L^{(1)}, L^{(2)}\in \Sigma(A,T)$. We denote $\displaystyle K[L^{(i)}]=\sigma^{(i)}$, $i=1,2$, we apply  $(\ref{34512})$ and  let $\epsilon\rightarrow 0$, $R\rightarrow \infty$. We get
\begin{center}
$\displaystyle \int_{0}^{t}2e^{-\tau}L_\tau^{(1)}\left[\frac{d}{d\tau}(\sigma_{\tau}^{(1)}-\sigma_{\tau}^{(2)})\right]d\tau+\int_{0}^{t}2e^{-\tau}\left[\frac{d}{d\tau}\sigma_{\tau}^{(2)}\right](L_{\tau}^{(1)}-L_{\tau}^{(2)})d\tau$\\
$\displaystyle=\int_{L_{t}^{(1)}}^{\infty}\xi v^{(1)}(\xi,t)d\xi-\int_{L_{t}^{(2)}}^{\infty}\xi v^{(2)}(\xi,t)d\xi.$
\end{center}
By integration by parts  we have
\begin{eqnarray*}
\displaystyle 2e^{-t}L_t^{(1)}[\sigma_t^{(1)}-\sigma_t^{(2)}]= \int_{0}^{t}(-2e^{-\tau}L_\tau^{(1)}+2e^{-\tau}\frac{d}{d\tau}L_\tau^{(1)})(\sigma_{\tau}^{(1)}-\sigma_{\tau}^{(2)})d\tau
\end{eqnarray*}
\begin{eqnarray}
\label{3424141}
-\int_{0}^{t}2e^{-\tau}\left[\frac{d}{d\tau}\sigma_{\tau}^{(2)}\right](L_{\tau}^{(1)}-L_{\tau}^{(2)})d\tau+\left[\int_{L_{t}^{(1)}}^{\infty}\xi v^{(1)}(\xi,t)d\xi-\int_{L_{t}^{(2)}}^{\infty}\xi v^{(2)}(\xi,t)d\xi\right].
\end{eqnarray}
\vskip 0.3cm
To control $\displaystyle \int_{L_{t}^{(1)}}^{\infty}\xi v^{(1)}(\xi,t)d\xi-\int_{L_{t}^{(2)}}^{\infty}\xi v^{(2)}(\xi,t)d\xi$, using $(\ref{777})$ and Fubini's theorem, we obtain for $i=1,2$:
\begin{eqnarray*}
\displaystyle\int_{L_t^{(i)}}^{\infty}xv^{(i)}(x,t)dx=\int_{b}^{\infty}h(\xi)\int_{L_t^{(i)}}^{\infty}xG(x,t;\xi,0)dxd\xi-\frac{1}{2}\int_{0}^{t}v_{\xi}^{(i)}(L_\tau^{(i)},\tau)\int_{L_t^{(i)}}^{\infty}xG(x,t;L_\tau^{(i)},\tau)dxd\tau
\end{eqnarray*}
\begin{eqnarray}
\label{44123}
+\int_{0}^{t}e^{-\tau}\int_{L_{t}^{(i)}}^{\infty}xG_\xi(x,t;L_\tau^{(i)},\tau)dxd\tau.
\end{eqnarray}
\vskip 0.3cm
Taking the difference for the first term of $(\ref{44123})$ , we have
\begin{eqnarray*}
I_1:=\int_{b}^{\infty}h(\xi)t[G(L_t^{(1)},t;\xi,0)-G(L_t^{(2)},t;\xi,0)]d\xi+\int_{L_t^{(1)}}^{L_t^{(2)}}\int_{b}^{\infty}\xi h(\xi)G(x,t;\xi,0)d\xi dx.
\end{eqnarray*}
so that $\displaystyle \left|I_1\right|\leq C\sup_{0\leq\tau\leq T}\left|L_{\tau}^{(1)}-L_{\tau}^{(2)}\right|$.
\vskip 0.3cm
Since
\begin{eqnarray*}
-\frac{1}{2}\int_{0}^{t}v_\xi^{(i)}(L_\tau^{(i)},\tau)\int_{L_t^{(i)}}^{\infty}xG(x,t;L_\tau^{(i)},\tau)dxd\tau=2\int_{0}^{t}e^{-\tau}\left[\frac{d}{d\tau}\sigma_{\tau}^{(i)}\right]\int_{L_t^{(i)}}^{\infty}xG(x,t;L_\tau^{(i)},\tau)dxd\tau,
\end{eqnarray*}
we also have by taking the difference for the second term of $(\ref{44123})$,
\begin{eqnarray*}
I_2:=2\int_{0}^{t}e^{-\tau}\frac{d}{d\tau}(\sigma_{\tau}^{(1)}-\sigma_{\tau}^{(2)})\int_{L_t^{(1)}}^{\infty}xG(x,t;L_\tau^{(1)},\tau)dxd\tau
\end{eqnarray*}
\begin{eqnarray}
\label{4552}
+2\int_{0}^{t}\left[\int_{L_t^{(1)}}^{\infty}xG(x,t;L_\tau^{(1)},\tau)dx-\int_{L_t^{(2)}}^{\infty}xG(x,t;L_\tau^{(2)},\tau)dx\right]e^{-\tau}\frac{d}{d\tau}\sigma_{\tau}^{(2)}d\tau.
\end{eqnarray}

By integration by parts, we get for the first term on right hand side of (\ref{4552})
\begin{eqnarray*}
&&2\int_{0}^{t}e^{-\tau}\frac{d}{d\tau}(\sigma_{\tau}^{(1)}-\sigma_{\tau}^{(2)})\int_{L_t^{(1)}}^{\infty}xG(x,t;L_\tau^{(1)},\tau)dxd\tau\\
\\&&\hskip1cm=e^{-t}[\sigma_{t}^{(1)}-\sigma_{t}^{(2)}]L_t^{(1)}-2\int_{0}^{t}[\sigma_\tau^{(1)}-\sigma_\tau^{(2)}]\frac{\partial}{\partial\tau}\left(e^{-\tau}\int_{L_t^{(1)}}^{\infty}xG(x,t;L_\tau^{(1)},\tau)dx\right)d\tau.
\end{eqnarray*}
so that
\begin{eqnarray*}
I_2=e^{-t}[\sigma_{t}^{(1)}-\sigma_{t}^{(2)}]L_t^{(1)}-2\int_{0}^{t}[\sigma_\tau^{(1)}-\sigma_\tau^{(2)}]\frac{\partial}{\partial\tau}\left(e^{-\tau}\int_{L_t^{(1)}}^{\infty}xG(x,t;L_\tau^{(1)},\tau)dx\right)d\tau
\end{eqnarray*}
\begin{eqnarray}
\label{3414141}
+2\int_{0}^{t}\left[\int_{L_t^{(1)}}^{\infty}xG(x,t;L_\tau^{(1)},\tau)dx-\int_{L_t^{(2)}}^{\infty}xG(x,t;L_\tau^{(2)},\tau)dx\right]e^{-\tau}\frac{d}{d\tau}\sigma_{\tau}^{(2)}d\tau.
\end{eqnarray}

To estimate the last term of $\eqref{3414141}$, we use the following identity for i=1,2;
\begin{eqnarray*}
\int_{L_t^{(i)}}^{\infty}xG(x,t;L_\tau^{(i)},\tau)dx=-(t-\tau)\int_{L_t^{(i)}}^{\infty}G_x(x,t;L_\tau^{(i)},\tau)dx+L_{\tau}^{(i)}\int_{L_t^{(i)}}^{\infty}G(x,t;L_\tau^{(i)},\tau)dx
\end{eqnarray*}
\begin{eqnarray*}
=(t-\tau)G(L_t^{(i)},t;L_\tau^{(i)},\tau)+L_{\tau}^{(i)}\int_{L_t^{(i)}}^{\infty}G(x,t;L_\tau^{(i)},\tau)dx=(t-\tau)G(L_t^{(i)},t;L_\tau^{(i)},\tau)+L_{\tau}^{(i)}\Psi\left(\frac{L_t^{(i)}-L_\tau^{(i)}}{\sqrt{t-\tau}}\right),
\end{eqnarray*}
where $\displaystyle \Psi(x)=\int_{x}^{\infty}\frac{1}{\sqrt{2\pi}}\exp\left\{{-\frac{|z|^2}{2}}\right\}dz$.
\vskip 0.3cm
By applying the mean value theorem on $\Psi$, we have
\begin{eqnarray*}
\displaystyle\left| \Psi\left(\frac{L_t^{(1)}-L_\tau^{(1)}}{\sqrt{t-\tau}} \right) - \Psi\left(\frac{L_t^{(2)}-L_\tau^{(2)}}{\sqrt{t-\tau}} \right)   \right|\leq C_1\left|\frac{L_t^{(1)}-L_\tau^{(1)}-L_t^{(2)}+L_\tau^{(2)} }{\sqrt{t-\tau}}\right|\leq\frac{C_1\sup_{0\leq\eta\leq T}\left|L_{\eta}^{(1)}-L_{\eta}^{(2)}\right|}{\sqrt{t-\tau}}
\end{eqnarray*}

By applying the mean value theorem on $\displaystyle \exp\left\{{-\frac{|\cdot|^2}{2}}\right\}$, for some $0<\theta<1$,
\begin{eqnarray*}
&&|G(L_t^{(1)},t;L_\tau^{(1)},\tau)-G(L_t^{(2)},t;L_\tau^{(2)},\tau)|
\\&&\hskip 1cm\leq\frac{1}{\sqrt{2\pi(t-\tau)}}\left|\theta\frac{L_t^{(1)}-L_\tau^{(1)}}{\sqrt{t-\tau}}+(1-\theta)\frac{L_t^{(2)}-L_\tau^{(2)}}{\sqrt{t-\tau}}  \right| \left|\frac{L_t^{(1)}-L_\tau^{(1)}-L_t^{(2)}+L_\tau^{(2)} }{\sqrt{t-\tau}}\right|\\&&\hskip 1cm
\leq\frac{C_2\sup_{0\leq\eta\leq T}\left|L_{\eta}^{(1)}-L_{\eta}^{(2)}\right|}{\sqrt{t-\tau}}
\end{eqnarray*}
Thus we obtain
\begin{eqnarray*}
\displaystyle\left|\int_{L_t^{(1)}}^{\infty}xG(x,t;L_\tau^{(1)},\tau)dx-\int_{L_t^{(2)}}^{\infty}xG(x,t;L_\tau^{(2)},\tau)dx \right|\leq C_3\sup_{0\leq\eta\leq T}\left|L_{\eta}^{(1)}-L_{\eta}^{(2)}\right|+C_4\frac{\sup_{0\leq\eta\leq T}\left|L_{\eta}^{(1)}-L_{\eta}^{(2)}\right|}{\sqrt{t-\tau}}.
\end{eqnarray*}
Also we observe
\begin{eqnarray*}
\left|\frac{\partial}{\partial\tau}\left(e^{-\tau}\int_{L_t^{(1)}}^{\infty}xG(x,t;L_\tau^{(1)},\tau)dx\right)d\tau\right|
\end{eqnarray*}
\begin{eqnarray*}
=\left|\frac{\partial}{\partial\tau}\left[e^{-\tau}\left\{(t-\tau)G(L_t^{(1)},t;L_\tau^{(1)},\tau)+L_\tau^{(1)}\Psi\left(\frac{L_t^{(1)}-L_\tau^{(1)}}{\sqrt{t-\tau}}\right)\right\}\right] \right|
\leq C_5+\frac{C_6}{\sqrt{t-\tau}}.
\end{eqnarray*}

For the third term of $(\ref{44123})$ we have
\begin{eqnarray*}
\int_{0}^{t}e^{-\tau}\int_{L_{t}^{(i)}}^{\infty}xG_\xi(x,t;L_\tau^{(i)},\tau)dxd\tau=\int_{0}^{t}e^{-\tau}\left[L_t^{(i)}G(L_t^{(i)},t;L_\tau^{(i)},\tau)+\int_{L_t^{(i)}}^{\infty}G(x,t;L_\tau^{(i)},\tau)\right]d\tau.
\end{eqnarray*}
Then we have
\begin{eqnarray*}
I_3:=\int_{0}^{t}e^{-\tau}\left[L_t^{(1)}G(L_t^{(1)},t;L_\tau^{(1)},\tau)+\int_{L_t^{(1)}}^{\infty}G(x,t;L_\tau^{(1)},\tau)-L_t^{(2)}G(L_t^{(2)},t;L_\tau^{(2)},\tau)-\int_{L_t^{(2)}}^{\infty}G(x,t;L_\tau^{(2)},\tau)\right]d\tau
\end{eqnarray*}
so that
\begin{eqnarray*}
|I_3|\leq C_7\sup_{0\leq\eta\leq T}\left|L_{\eta}^{(1)}-L_{\eta}^{(2)}\right|.
\end{eqnarray*}

Combining all the results and $(\ref{3424141})$ finally we obtain for all sufficiently small $T>0$,
\begin{eqnarray*}
\displaystyle \left|\sigma_t^{(1)}-\sigma_t^{(2)}\right|\leq C_{8}\int_{0}^{t}\left|\sigma_\tau^{(1)}-\sigma_\tau^{(2)}\right|d\tau+C_{9}\int_{0}^{t}\frac{\left|\sigma_\tau^{(1)}-\sigma_\tau^{(2)}\right|}{\sqrt{t-\tau}}d\tau+C_{10}\sup_{0\leq\tau\leq T}\left|L_{\tau}^{(1)}-L_{\tau}^{(2)}\right|\\
\leq (C_{8}+C_{9})\int_{0}^{t}\frac{\left|\sigma_\tau^{(1)}-\sigma_\tau^{(2)}\right|}{\sqrt{t-\tau}}d\tau+C_{10}\sup_{0\leq\tau\leq T}\left|L_{\tau}^{(1)}-L_{\tau}^{(2)}\right|.
\end{eqnarray*}

By  Lemma 17.7.1   in \cite{Cannon}, we get $\displaystyle \sup_{0\leq\tau\leq T}\left|\sigma_t^{(1)}-\sigma_t^{(2)}\right|\leq C_{11}\sup_{0\leq\tau\leq T}\left|L_{\tau}^{(1)}-L_{\tau}^{(2)}\right|$ so that $K$ is a continuous map.
\end{proof}

\begin{proof}{\textbf{of Theorem \ref{3424}}}
\\Let $(L,v)$ as in Lemma \ref{637} then by (\ref{40303})  $K[L]=L$ is in $\displaystyle C^{1}([0,T])$. This completes the proof.
\end{proof}

\section{Proof of Theorem \ref{34}}

Suppose that $h(x) = \dfrac{d\rho_0(x)}{dx}$ and let
$(L,v)$ be as in Theorem \ref{3424} with such $h$ as initial condition.
 Let $\displaystyle\rho(x,t):=\int_{L_t}^{x}e^{t}v(y,t)dy$. It can be readily shown that $\rho$ solves from the first  to the third equations in $(*)$, see Section \ref{sec:intro}. To prove the last equation in $(*)$  we first
 remark that by \eqref{???}
 $v\in L^1([L_t,\infty))$.
We then differentiate $\displaystyle \int_{L_t}^{\infty}v(y,t)dy$ with respect to $t$:
 \begin{eqnarray*}
 \displaystyle \frac{d}{dt}\left(\int_{L_t}^{\infty}v(x,t)dx\right)=-\dot{L_t}v(L_t,t)+\int_{L_t}^{\infty}v_t(x,t)dx\\=-2\dot{L_t}e^{-t}+\int_{L_t}^{\infty}\frac{1}{2}v_{xx}(x,t)
 =-2\dot{L_t}e^{-t}-\frac{1}{2}v_x(L_t,t)=0.
 \end{eqnarray*}
 Since $\displaystyle \int_{b}^{\infty}h(x)dx=\int_{b}^{\infty}\frac{d\rho_0(x)}{dx}dx=0$, we obtain that $\displaystyle \int_{L_t}^{\infty}v(x,t)dx=0$. By using this, $(\ref{???})$, and Fubini's theorem, we also have
  \begin{eqnarray}
\rho(x,t)=e^{t}\left[\int_{0}^{t}G(x,t;L_\tau,\tau)\varphi(\tau)d\tau-\int_{b}^{\infty}h(\xi)\int_{x}^{\infty}G(y,t;\xi,0)dyd\xi\right]
 \end{eqnarray}
so that $\displaystyle\int_{L_t}^{\infty}\rho(x,t)dx<\infty$. Similarly, if we differentiate $\displaystyle\int_{L_t}^{\infty}\rho(x,t)dx$ with respect to $t$ and get $\displaystyle \frac{d}{dt}\left(\int_{L_t}^{\infty}\rho(x,t)dx\right)=\int_{L_t}^{\infty}\rho_t(x,t)dx=\int_{L_t}^{\infty}\left(\frac{1}{2}\rho_{xx}(x,t)+\rho(x,t)\right)dx=-\frac{1}{2}\rho_x(L_t,t)+\int_{L_t}^{\infty}\rho(x,t)dx=-1+\int_{L_t}^{\infty}\rho(x,t)dx$. Since $\displaystyle \int_{b}^{\infty}\rho_0(x)dx=1$, we obtain that $\displaystyle \int_{L_t}^{\infty}\rho(x,t)dx=1$. The derivative $\rho_x(x,t)$ has a limit when $x\rightarrow L_t$ and $\displaystyle\rho_x(L_t,t)=2$ and vanishes as $x\to \infty$ by Proposition \ref{96}. This completes the proof of  Theorem \ref{34}; the statements below Theorem \ref{34} also follow from what proved for $v(x,t)$.

\vskip2cm

{\bf Acknowledgments.}
I thank A. De Masi, P. Ferrari and E. Presutti for useful discussions.


\begin{thebibliography}{}

\bibitem{Cannon} J.R. Cannon: \textit{The One-Dimensional Heat Equation}, Addison-Wesley Publishing Company (1984).


\bibitem{Fasano} A. Fasano, \textit{Mathematical models of some diffusive processes with free boundaries}, SIMAI e-Lecture Notes (2008).

\bibitem{BD} E.Brunet, B.Derrida, \textit{Shift in the velocity of a front due to a cutoff}, Phys. Rev. E (3) 56 2597-2604. MR1473413 (1997).

\bibitem{BH} C.M.Brauner, J.Hulshof,  \textit{A General Approach to Stability in Free Boundary Problems}, Journal of Differential Equations 164, 1648 (2000).

\bibitem{CV} L.A. Caffarelli, J.L.Vazquez, \textit{A free-boundary problem for the heat equation arising in flame propagation}, Transactions of the American Mathematical Society, Volume 347, Number 2, February (1995).

\bibitem{CDGP}G.Carinci, A. De Masi, C.Giardin\`{a}, E.Presutti,  \textit{Free Boundary Problems in PDEs and Particle Systems}, Springer (2016).

\bibitem{DFPS} A. De Masi, P.Ferrari, E.Presutti, N.Soprano-Loto,  \textit{Hydrodynamics of the NBBM process}, Preprint, https://arxiv.org/abs/1707.00799 (2017).

\bibitem{BBD} J.Berestycki, E.Brunet, B.Derrida,  \textit{Exact solution and precise asymptotics of a Fisher-KPP type front}, Preprint, https://arxiv.org/abs/1705.08416 (2017).




\end{thebibliography}
\end{document}